\newtheorem{theo}{Theorem}[section]
\newtheorem{lemm}[theo]{Lemma}
\newtheorem{conj}[theo]{Conjecture}
\theoremstyle{definition}
\theoremstyle{remark}
\begin{document}

\begin{center} {\Large \bf Complete Solution to a Problem on the Maximal
Energy of Unicyclic Bipartite Graphs \footnote{Supported by NSFC and
``the Fundamental Research Funds for the Central Universities". }}

\end{center}

\begin{center}
{
  {\small  Bofeng Huo$^{1,2}$, Xueliang Li$^1$, Yongtang Shi$^1$} \\[3mm]
  {\small $^1$Center for Combinatorics and LPMC-TJKLC}\\
  {\small Nankai University, Tianjin 300071, China}\\
  {\small E-mail: huobofeng@mail.nankai.edu.cn;  lxl@nankai.edu.cn; shi@nankai.edu.cn}\\
  {\small $^2$Department of Mathematics and Information Science}\\
  {\small Qinghai Normal University, Xining 810008, China}\\[2mm]

}
\end{center}

\begin{center}
\begin{minipage}{120mm}
\begin{center}
{\bf Abstract}
\end{center}
{\small The energy of a simple graph $G$, denoted by $E(G)$, is
defined as the sum of the absolute values of all eigenvalues of its
adjacency matrix. Denote by $C_n$ the cycle, and $P_n^{6}$ the
unicyclic graph obtained by connecting a vertex of $C_6$ with a leaf
of $P_{n-6}$\,. Caporossi et al. conjecture that the unicyclic graph
with maximal energy is $P_n^6$ for $n=8,12,14$ and $n\geq 16$.
In``Y. Hou, I. Gutman and C. Woo, Unicyclic graphs with maximal
energy, {\it Linear Algebra Appl.} {\bf 356}(2002), 27--36", the
authors proved that $E(P_n^6)$ is maximal within the class of the
unicyclic bipartite $n$-vertex graphs differing from $C_n$\,. And
they also claimed that the energy of $C_n$ and $P_n^6$ is
quasi-order incomparable and left this as an open problem. In this
paper, by utilizing the Coulson integral formula and some knowledge
of real analysis, especially by employing certain combinatorial
techniques, we show that the energy of $P_n^6$ is greater than that
of $C_n$ for $n=8,12,14$ and $n\geq 16$, which completely solves
this open problem and partially solves the above conjecture.
\\ [2mm]
Keywords: energy; Coulson integral formula; unicyclic bipartite
graph\\}

\end{minipage}
\end{center}

\section{Introduction}

Let $G$ be a simple graph of order $n$, $A(G)$ the
adjacency matrix of $G$. The characteristic polynomial of $A(G)$ is usually
called the characteristic polynomial of $G$, denoted by
\begin{equation*}\label{chapoly1}
\phi(G, x)=\det(xI-A(G))=x^n+a_1x^{n-1}+\cdots+a_n,
\end{equation*}
It is well-known \cite{cvet1980} that the characteristic polynomial
of a bipartite graph $G$ takes the form
\begin{equation*}\label{cvetgraph}
\phi(G,
x)=\sum_{k=0}^{\lfloor{n/2}\rfloor}a_{2k}x^{n-2k}=\sum_{k=0}^{\lfloor{n/2}\rfloor}(-1)^{k}b_{2k}x^{n-2k},
\end{equation*}
where $b_{2k}=(-1)^ka_{2k}$ and $b_{2k}\geq 0$ for all
$k=1,\ldots,\lfloor{n/2}\rfloor$, especially $b_0=a_0=1$. Moreover,
the characteristic polynomial of a tree $T$ can be expressed as
\begin{equation*}\label{cvettree}
\phi(T,  x)=\sum_{k=0}^{\lfloor{n/ 2}\rfloor}(-1)^{k}m(T,k)x^{n-2k},
\end{equation*}
where $m(T,k)$ is the number of $k$-matchings of $T$.

For a graph $G$, Let $\lambda_1, \lambda_2,\ldots,
\lambda_n$ denote the eigenvalues of its characteristic polynomial. The
energy of a graph $G$ is defined as
$$E(G)=\sum_{i=1}^n|\lambda_i|.$$
This definition was proposed by Gutman \cite{gutman1978}. The
following formula is also well-known
\begin{equation*}
E(G)={1\over\pi}\int^{+\infty}_{-\infty}{1\over x^2}\log |x^n
{\phi(G,i/x)}|\mathrm{d}x,
\end{equation*}
where $i^2=-1$. Furthermore, in the book of Gutman and Polansky
\cite{gutman&polansky1986}, the above equality was converted into an
explicit formula as follows:
\begin{equation*}\label{tujifen}
E(G)={1\over2\pi}\int^{+\infty}_{-\infty}{1\over
x^2}\log\left[\left(\sum_{k=0}^{\lfloor{n/
2}\rfloor}(-1)^ka_{2k}x^{2k}\right)^2+\left(\sum_{k=0}^{\lfloor{n/
2}\rfloor}(-1)^ka_{2k+1}x^{2k+1}\right)^2\right] \mathrm{d}x.
\end{equation*}
For more results about graph energy, we refer the reader to the
recent survey of Gutman, Li and Zhang \cite{gutman&lxl2009}.

For two trees $T_1$ and $T_2$ of the same order, one can introduce a
quasi order $\preceq$ in the set of trees, namely, if $m(T_1,k)\leq
m(T_2,k)$ holds for all $k\geq 0$, then define $T_1\preceq T_2$, and
so $T_1\preceq T_2$ implies $E(T_1)\leq E(T_2)$ (e.g.
\cite{gutman1977}). Similarly, one can generalize the quasi order to
the cases of bipartite graphs (e.g. \cite{lxl&zjb2009}) and
unicyclic graphs (e.g. \cite{hou2001}). The quasi order method is
commonly used to compare the energies of two trees, bipartite graphs
or unicyclic graphs. However, for general graphs, it is difficult to
define such a quasi order. If, for two trees or bipartite graphs,
the above quantities $m(T, k)$ or $|a_k(G)|$ can not be compared
uniformly, then the common comparing method is invalid, and this
happened very occasionally. Recently, for these quasi-order
incomparable problems, we find an efficient way to determine which
one attains the extremal value of the energy, see \cite{Huo&Ji&Li1,
Huo&Ji&Li2, Huo&Ji&Li3, HJLS20101028}.

Let $C_n$ be the cycle, and $P_n^{6}$ be the unicyclic graph
obtained by connecting a vertex of $C_6$ with a leaf of $P_{n-6}$\,.
In \cite{CC}, Caporossi et al. proposed a conjecture on the
unicyclic graph with the maximum energy.
\begin{conj}
Among all unicyclic graphs on $n$ vertices, the cycle $C_n$ has
maximal energy if $n\leq 7$ and $n=9,10,11,13$ and $15$\,. For all
other values of $n$\,, the unicyclic graph with maximal energy is
$P_n^6$\,.
\end{conj}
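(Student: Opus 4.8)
The plan is to reduce the conjecture to the single quasi-order-incomparable comparison between $C_n$ and $P_n^6$, and then to settle that comparison analytically with the Coulson integral formula. By the theorem of Hou, Gutman and Woo quoted above, $P_n^6$ already dominates, in the quasi order and hence in energy, every unicyclic bipartite $n$-vertex graph other than $C_n$; so for even $n$ it suffices to prove $E(P_n^6)>E(C_n)$, and this, together with the classical closed forms for $E(C_n)$, is also what is needed to approach the odd values of $n$ occurring in the conjecture. I will therefore concentrate on establishing $E(P_n^6)>E(C_n)$ for $n=8,12,14$ and $n\ge 16$, which is exactly the claimed ``partial'' resolution; the full statement, involving all (including non-bipartite) unicyclic graphs, is beyond what this single comparison delivers.

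First I would write both energies through the bipartite form of the Coulson integral. Since $P_n^6$ and (for even $n$) $C_n$ are bipartite, one has $|x^n\phi(G,i/x)|=b(G,x):=\sum_{k}b_{2k}(G)x^{2k}$ with $b_{2k}\ge 0$, so that
\[
E(P_n^6)-E(C_n)=\frac{2}{\pi}\int_0^{\infty}\frac{1}{x^2}\,\ln\frac{b(P_n^6,x)}{b(C_n,x)}\,dx .
\]
The next step is to make the two $b$-polynomials explicit. Using the Sachs/cycle-correction identity for a unicyclic bipartite graph whose cycle has length $6\equiv 2\pmod 4$, I get $b(P_n^6,x)=\mu(P_n^6,x)+2x^6\mu(P_{n-6},x)$, where $\mu(\cdot,x)$ is the matching-generating polynomial, while $b(C_n,x)=\mu(C_n,x)-2(-1)^{n/2}x^n$. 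The polynomials $\mu(P_m,x)$ obey the Chebyshev-type recursion $\mu(P_m,x)=\mu(P_{m-1},x)+x^2\mu(P_{m-2},x)$, and $\mu(P_n^6,x)$ unfolds through the edge-join identity $\mu(P_n^6,x)=\mu(C_6,x)\mu(P_{n-6},x)+x^2\mu(P_5,x)\mu(P_{n-7},x)$; together these furnish closed or recursive expressions for every coefficient.

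Comparing coefficients then exhibits precisely the obstruction advertised in the abstract. One finds $b_0=b_0=1$ and $b_2=b_2=n$, but $b_4(P_n^6)-b_4(C_n)=-1<0$, whereas the highest-degree coefficients favour $P_n^6$ (for instance $b_n(P_n^6)=4\ge b_n(C_n)$, the latter being $0$ when $4\mid n$). Hence $b(P_n^6,x)-b(C_n,x)$ is negative for small $x>0$ and strictly dominant for large $x$, so the integrand above is negative near $0$ and positive near $\infty$: no term-by-term or pointwise domination is available, which is exactly why the two graphs are quasi-order incomparable and why the naive method fails.

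The hard part is therefore to prove that the positive tail outweighs the negative head, i.e. that the integral is strictly positive. My plan is to isolate the finitely many low-order ``deficit'' terms of $b(P_n^6,x)-b(C_n,x)$, bound their weighted contribution from below using monotonicity of $\ln$ together with the elementary evaluation $\int_0^\infty x^{-2}\ln(1+a^2x^2)\,dx=\pi a$, and show that the aggregate surplus carried by the high-order coefficients, which grows with $n$, dominates. I expect this estimate to require splitting into the residue classes $n\equiv 0,2\pmod 4$, since the sign $-2(-1)^{n/2}$ and the vanishing of $b_n(C_n)$ for $4\mid n$ behave differently, and to be the step where the combinatorial techniques of the abstract enter, to control the signs and sizes of the coefficient differences uniformly in $n$. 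The small cases $n=8,12,14$ I would verify by direct computation of the characteristic polynomials, and for the odd $n\ge 17$ of the conjecture, where $C_n$ is not bipartite and the single-logarithm reduction fails, I would instead combine the closed form $E(C_n)=2/\sin(\pi/(2n))$ with a lower bound for $E(P_n^6)$ obtained from the bipartite integral, reducing the comparison to a transcendental inequality in $n$. The principal obstacle throughout is the sign change of the integrand, which forces a quantitative, $n$-uniform control of the competing contributions.
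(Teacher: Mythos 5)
Your framing is correct as far as it goes, and it matches the paper's own scoping: neither you nor the paper proves the full conjecture (non-bipartite unicyclic graphs remain untouched), and the real content is the single comparison $E(P_n^6)>E(C_n)$ for $n=8,12,14$ and $n\geq 16$. Your setup for that comparison is also sound: the reduction via Theorem \ref{HLSthm0}, the Coulson-integral formulation, and the coefficient computations ($b_4(P_n^6)-b_4(C_n)=-1$ while the top coefficients favour $P_n^6$) correctly identify the quasi-order obstruction. The genuine gap is that the proof stops exactly where the difficulty starts, in two places. For even $n$, the claim that ``the positive tail outweighs the negative head'' is only announced as a plan (``My plan is to isolate\dots I expect this estimate to require\dots''); no inequality is proved. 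And this is not a routine estimate to finish: $E(C_n)$ and $E(P_n^6)$ both grow like $4n/\pi$ and differ only by a small additive constant --- the paper's numerics give differences of about $0.0096$, $0.0375$, $0.0229$ at $n=17,18,19$ --- so any head-versus-tail bounding scheme must be accurate to within a few hundredths \emph{uniformly in $n$}; isolating finitely many deficit coefficients and invoking $\int_0^\infty x^{-2}\ln(1+a^2x^2)\,\mathrm{d}x=\pi a$ gives no indication of reaching that precision. For odd $n$, the proposed route via $E(C_n)=2/\sin(\pi/(2n))$ plus an unspecified ``lower bound for $E(P_n^6)$'' has the same defect amplified: the lower bound must capture the additive constant in the linear growth of $E(P_n^6)$ exactly, which is essentially the whole problem, and nothing in the proposal produces it.

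It is worth recording how the paper escapes this trap, because its mechanism is what your plan is missing. Instead of estimating one integral directly, the paper solves the recurrences of Lemma \ref{HJLSlem1} to obtain closed forms $\phi(P_n^6,x)=A_1(x)(Y_1(x))^n+A_2(x)(Y_2(x))^n$ and $\phi(C_n,x)=(Y_1(x))^n+(Y_2(x))^n-2$, and then argues in two regimes: (i) for $n\equiv 1,2,3\pmod 4$ it proves that the pointwise difference of integrands between $n$ and $n+4$ has a fixed sign, so $E(C_n)-E(P_n^6)$ is decreasing along each residue class and everything reduces to the three numerical base cases $n=17,18,19$; (ii) for $n\equiv 0\pmod 4$ it bounds the integrand pointwise by its $n\to\infty$ limit $\log(1/A_1(ix))$ (resp.\ $\log(1/A_2(ix))$) and evaluates those two $n$-independent integrals numerically with the help of Lemma \ref{inequalitylemm}. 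This monotonicity-plus-base-case device replaces the uniform-in-$n$ estimate you would otherwise need, and since Lemma \ref{lemma1.1} requires no bipartiteness it also handles odd $n$ inside the same framework, so no closed form for $E(C_n)$ is needed. As it stands, your proposal is a correct framing of the problem with the two decisive steps missing.
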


\begin{theo}\label{HLSthm0}
Let $G$ be any connected, unicyclic and bipartite graph on $n$
vertices and $G \ncong  C_n$\,. Then ${E}(G)<{E}(P_n^6)$\,.
\end{theo}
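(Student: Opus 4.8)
The plan is to run the argument entirely through the quasi-order on bipartite graphs induced by the coefficients $b_{2k}$, using the explicit Coulson integral recalled above as the engine that turns coefficient comparisons into energy comparisons. For a bipartite graph the odd coefficients vanish and $(-1)^k a_{2k} = b_{2k}$, so that integral collapses to $E(G) = \frac{1}{\pi}\int_{-\infty}^{+\infty} x^{-2}\log\bigl(\sum_k b_{2k} x^{2k}\bigr)\,\mathrm{d}x$, which is strictly increasing in each $b_{2k}$ because all $b_{2k}\geq 0$. Hence it suffices to prove that $b_{2k}(G) \leq b_{2k}(P_n^6)$ for every $k$, with at least one strict inequality, for each admissible $G \ncong C_n$. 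First I would record the Sachs-type expansion for a connected unicyclic bipartite $G$ whose unique cycle is $C_\ell$ (with $\ell$ even, $\ell\geq 4$): each Sachs subgraph is either a matching or a matching together with the single cycle, giving
$$b_{2k}(G) = m(G,k) + 2(-1)^{\ell/2+1}\, m(G\setminus C_\ell,\, k-\ell/2),$$
where $m(\cdot,k)$ counts $k$-matchings and $G\setminus C_\ell$ deletes the cycle vertices. The decisive point is the sign $(-1)^{\ell/2+1}$, which equals $+1$ exactly when $\ell\equiv 2 \pmod 4$; this is why a graph built on $C_6$ is the natural candidate for the maximum.

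The first reduction is to fix the cycle length $\ell$ and show that within the class of connected unicyclic bipartite $n$-vertex graphs with cycle $C_\ell$, the graph $P_n^\ell$ (a single pendant path attached at one cycle vertex) dominates in the quasi-order. I would obtain this from the standard matching-transfer lemmas together with the recursion $m(G,k) = m(G-e,k) + m(G-\{u,v\},k-1)$ for an edge $e=uv$: repeatedly straightening the attached branches into one pendant path can only raise every $m(G,k)$, and simultaneously raises the matching numbers of $G\setminus C_\ell$, since among forests on a fixed number of vertices a single path maximizes all matching numbers. Because the cross term carries a fixed sign for fixed $\ell$, both contributions to the displayed formula move upward together, so $P_n^\ell$ is the quasi-order maximum in its class.

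The second reduction is the comparison across cycle lengths. Graphs with $\ell\equiv 0\pmod 4$ carry a negative cross term, so $b_{2k}(G)\leq m(G,k)$, and these I would control by $P_n^6$ directly. The substantive work is to show $b_{2k}(P_n^6)\geq b_{2k}(P_n^\ell)$ for all $\ell\equiv 2\pmod 4$ with $\ell\geq 10$, for which I would set up explicit characteristic-polynomial recursions for the family $P_n^\ell$ along the pendant path and compare the resulting matching polynomials term by term. I expect this cross-length step to be the main obstacle, since it is precisely where the coefficient inequalities cease to be transparently uniform and must be extracted from the recursions by hand.

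Finally, this scheme itself explains the hypothesis $G\ncong C_n$. The cycle $C_n$ is the degenerate case $\ell=n$ with no attached path, and its coefficient sequence differs from that of $P_n^6$ only in a single top term of the form $\pm 2$; that one slot breaks the uniform comparison, so the quasi-order method cannot decide the pair $\{C_n, P_n^6\}$ and $C_n$ is legitimately excluded at this stage. Settling exactly that incomparable pair through the Coulson integral and real-analytic estimates is the separate contribution announced in the abstract.
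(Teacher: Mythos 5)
First, a point of orientation: this paper never proves Theorem \ref{HLSthm0}. It is quoted from Hou, Gutman and Woo \cite{hou2002}, and the paper's own contribution is Theorem \ref{HJLStheo}, the analytic comparison of $P_n^6$ with $C_n$ that the quasi-order cannot reach. So your proposal has to be judged against the proof in that cited reference, and in outline it coincides with it: monotonicity of the Coulson integral in the coefficients $b_{2k}$, the Sachs expansion $b_{2k}(G)=m(G,k)+2(-1)^{\ell/2+1}m(G\setminus C_\ell,k-\ell/2)$, reduction to $P_n^\ell$ within a fixed cycle length, then comparison across cycle lengths. That is the right (and the standard) skeleton.

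As a proof, however, your text has genuine gaps, and they sit exactly at the load-bearing steps. (i) The cross-length comparison $b_{2k}(P_n^\ell)\le b_{2k}(P_n^6)$ for $\ell\equiv 2\pmod 4$, $\ell\ge 10$ is only announced (``the main obstacle''), not carried out; that step is the substance of the theorem, so what you have is a plan, not a proof. (ii) The case $\ell\equiv 0\pmod 4$ cannot be ``controlled by $P_n^6$ directly'' as you suggest. After discarding the negative cross term you still must show $m(G,k)\le b_{2k}(P_n^6)$, and this genuinely needs the cross term $2m(P_{n-6},k-3)$ on the right-hand side, because plain matching counts of such graphs are not dominated by those of $P_n^6$: already $m(P_7^4,3)=6>5=m(P_7^6,3)$. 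Nor can you route the bound through the matching-maximal unicyclic graph $C_n$, since $b_4(C_n)=\binom{n}{2}-n>\binom{n}{2}-n-1=b_4(P_n^6)$. So case (ii) requires the same delicate cross-length analysis as (i), not a one-line reduction. (iii) Your closing justification for excluding $C_n$ is factually wrong: the coefficient sequences of $C_n$ and $P_n^6$ do not differ ``only in a single top term $\pm 2$''. For $n=8$ they are $(1,8,20,16,0)$ for $C_8$ versus $(1,8,19,16,4)$ for $P_8^6$: the cycle wins at $k=2$ (any graph with a vertex of degree $3$ loses a $2$-matching) and loses at larger $k$, so the incomparability is spread across the coefficient list. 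The ``$\pm 2$'' discrepancy you have in mind is between $\phi(C_n)$ and $\phi(P_n)-\phi(P_{n-2})$ (Lemma \ref{HJLSlem1}), not between $C_n$ and $P_n^6$. Your conclusion --- that the pair is quasi-order incomparable, hence rightly excluded at this stage and settled by the present paper's integral-formula estimates --- is nonetheless correct.
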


In \cite{hou2002}, the authors proved Theorem \ref{HLSthm0} that is
weaker than the above conjecture, namely that $E(P_n^6)$ is maximal
within the class of the unicyclic bipartite $n$-vertex graphs
differing from $C_n$\,. And they also claimed that the energy of
$C_n$ and $P_n^6$ is quasi-order incomparable. In this paper, we
will employ the Coulson integral formula and some knowledge of
analysis, especially by using certain combinatorial techniques, to
show that $E(C_n)<E(P_n^6)$,  and then completely determine that
$P_n^6$ is the only graph which attains the maximum value of the
energy among all the unicyclic bipartite graphs, which partially
solves the above conjecture.

\begin{theo}\label{HJLStheo}
For $n=8,12,14$ and $n\geq 16$, $E(P_n^6)>E(C_n)$.
\end{theo}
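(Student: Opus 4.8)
The plan is to evaluate the difference $E(P_n^6)-E(C_n)$ through the Coulson integral formula and to prove that it is positive. Since $P_n^6$ is bipartite for every $n$ and $C_n$ is bipartite when $n$ is even, for even $n$ both characteristic polynomials contain only even powers of $x$, so the explicit formula quoted in the Introduction collapses (the odd sum vanishes) to the clean form
\[ E(G)=\frac{1}{\pi}\int_{-\infty}^{+\infty}\frac{1}{x^{2}}\log\Big(\sum_{k\ge 0}b_{2k}(G)\,x^{2k}\Big)\,dx . \]
Writing $b(G,x)=\sum_{k\ge 0}b_{2k}(G)x^{2k}$, the task becomes to show
\[ E(P_n^6)-E(C_n)=\frac{1}{\pi}\int_{-\infty}^{+\infty}\frac{1}{x^{2}}\log\frac{b(P_n^6,x)}{b(C_n,x)}\,dx>0 , \]
so I would first reduce everything to an inequality between the two explicit polynomials $b(P_n^6,x)$ and $b(C_n,x)$.

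Second, I would compute their coefficients via the Sachs/coefficient theorem. For the cycle this gives $b_{2k}(C_n)=m(C_n,k)=\frac{n}{n-k}\binom{n-k}{k}$ for $2k<n$, together with the single cycle correction at the top degree, which makes $b_n(C_n)=0$ when $n\equiv 0\pmod 4$ and $b_n(C_n)=4$ when $n\equiv 2\pmod 4$. Because $P_n^6$ has the unique cycle $C_6$ and $P_n^6-C_6=P_{n-6}$, the same theorem yields $b_{2k}(P_n^6)=m(P_n^6,k)+2\,m(P_{n-6},k-3)$; here $m(P_m,k)=\binom{m-k}{k}$, and $m(P_n^6,k)$ is obtained from the bridge identity $\phi(P_n^6,x)=\phi(C_6,x)\phi(P_{n-6},x)-\phi(P_5,x)\phi(P_{n-7},x)$. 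This expresses every coefficient of both polynomials as an explicit combination of binomial coefficients.

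The heart of the argument — and the main obstacle — is that these coefficients are \emph{quasi-order incomparable}: a direct count gives $b_4(P_n^6)=\tfrac{n(n-3)}{2}-1<\tfrac{n(n-3)}{2}=b_4(C_n)$, so near $x=0$ one has $b(P_n^6,x)<b(C_n,x)$ and the integrand is negative, whereas the top coefficients satisfy $b_{\mathrm{top}}(P_n^6)>b_{\mathrm{top}}(C_n)$ (strikingly, $b_n(C_n)=0$ for $n\equiv0\pmod4$ while $P_n^6$ always has a perfect matching), so the integrand is positive for large $x$. Hence no term-by-term comparison is available, and, worse, the weight $1/x^2$ emphasizes exactly the region of small $x$ where $C_n$ dominates. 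I would therefore split $\int_{0}^{\infty}$ at a carefully chosen threshold, bound the bounded negative contribution from below using the low-order coefficient estimates, and bound the positive tail from below using the combinatorial identities of the previous step. The required polynomial inequalities are delicate precisely because both energies grow like $4n/\pi$ and $E(P_n^6)-E(C_n)$ is only an $O(1)$ quantity, so only sharp estimates on the $b_{2k}$ will close the gap.

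Finally I would treat the parity cases and the boundary values. For even $n$ the bipartite integral applies verbatim; for odd $n$, where $C_n$ is not bipartite, I would instead use the general Coulson formula together with the known closed form of $E(C_n)$ and a matching-based lower bound for $E(P_n^6)$, arranging the estimates to be uniform in $n$. The uniform estimates would cover all sufficiently large $n$, leaving the finitely many small cases $n=8,12,14,16,\dots$ to be verified directly from the explicit polynomials; as a consistency check, the same analysis should reproduce $E(C_n)>E(P_n^6)$ for the excluded values $n=9,10,11,13,15$, in agreement with the Caporossi et al.\ conjecture.
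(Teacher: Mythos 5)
Your setup is sound and most of your coefficient computations check out ($b_{2k}(C_n)=m(C_n,k)=\tfrac{n}{n-k}\binom{n-k}{k}$, $b_{2k}(P_n^6)=m(P_n^6,k)+2m(P_{n-6},k-3)$, and $b_4(P_n^6)=b_4(C_n)-1$ are all correct), but the proposal stops exactly where the difficulty of this problem begins, so there is a genuine gap. The step you describe as ``split $\int_0^\infty$ at a carefully chosen threshold, bound the bounded negative contribution from below\dots and bound the positive tail from below'' is never carried out: no threshold is exhibited, no inequality is proved, and no mechanism is given for making the estimates uniform in $n$. As you yourself observe, $E(P_n^6)-E(C_n)$ is only $O(1)$ while both energies grow like $4n/\pi$; since both the negative contribution near $x=0$ and the compensating positive contribution remain of constant order as $n\to\infty$, the two bounds would have to be sharp to within absolute constants, and saying that ``sharp estimates on the $b_{2k}$ will close the gap'' restates the open problem rather than solving it. Even the qualitative sign picture is not quite right: for $n\equiv 2 \pmod 4$ one has $b_n(P_n^6)=2+2\cdot 1=4=b_n(C_n)$, so the top coefficients are \emph{equal} and the positivity of the integrand for large $x$ does not follow from the argument you give. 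The odd-$n$ case is weaker still: a ``matching-based lower bound for $E(P_n^6)$'' that is accurate to within $O(1)$ uniformly in $n$ is not available --- quasi-order comparisons are exactly what fail here, which is why the problem was left open --- and deferring ``finitely many small cases'' is meaningful only after an explicit threshold $n_0$ has been produced, which it has not.

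For contrast, the paper sidesteps uniform estimation entirely by a structural device your proposal lacks. Using Binet-type closed forms $\phi(C_n,x)=(Y_1(x))^n+(Y_2(x))^n-2$ and $\phi(P_n^6,x)=A_1(x)(Y_1(x))^n+A_2(x)(Y_2(x))^n$ obtained from the recurrence $f(n,x)=xf(n-1,x)-f(n-2,x)$, it proves that $E(C_n)-E(P_n^6)$ is \emph{decreasing in $n$} within each residue class $n\equiv 1,2,3\pmod 4$, by showing that the difference of the Coulson integrands at $n+4$ and at $n$ is pointwise negative; this reduces those three classes to the single numerical checks at $n=17,18,19$. For $n\equiv 0\pmod 4$ it bounds the integrand by its $n\to\infty$ limit ($\log\frac{1}{A_1(ix)}$ for $x>0$, $\log\frac{1}{A_2(ix)}$ for $x<0$) and evaluates two explicit integrals numerically. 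If you want to rescue your approach, you need either to carry out the delicate two-region estimates in full with explicit constants, or to find a comparable reduction (such as this monotonicity-in-$n$ argument) that collapses the infinite family to finitely many verifiable cases.
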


\section{Main results}
We recall some knowledge on real analysis, for which we refer to
\cite{zorich2002}.

\begin{lemm}\label{inequalitylemm}
For any real number $X>-1$, we have
 \begin{equation*}
\frac{X}{1+X}\leq\log(1+ X)\leq X.
\end{equation*}
\end{lemm}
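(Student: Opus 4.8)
The plan is to establish the two inequalities separately, since each reduces to showing that a single auxiliary function of $X$ is nonnegative on the open interval $(-1,\infty)$, with equality exactly at $X=0$. Both auxiliary functions vanish at $X=0$, so the entire task is to track the sign of their derivatives and confirm that $X=0$ is a global minimum rather than merely a stationary point.

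For the upper bound $\log(1+X)\leq X$, I would set $f(X)=X-\log(1+X)$. Then $f(0)=0$ and
\[
f'(X)=1-\frac{1}{1+X}=\frac{X}{1+X},
\]
which is negative for $-1<X<0$ and positive for $X>0$, the denominator $1+X$ being positive throughout the domain. Hence $f$ is strictly decreasing on $(-1,0)$ and strictly increasing on $(0,\infty)$, so $f(X)\geq f(0)=0$, which is exactly $\log(1+X)\leq X$.

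For the lower bound, rather than repeat a derivative computation I would exploit the upper bound via a substitution. Since $1+X>0$, the value $\tfrac{1}{1+X}$ is a legitimate choice of $1+t$ with $t>-1$; applying $\log(1+t)\leq t$ at $t=\frac{-X}{1+X}$ gives
\[
\log\left(\frac{1}{1+X}\right)\leq \frac{-X}{1+X},
\]
that is $-\log(1+X)\leq \frac{-X}{1+X}$, which rearranges to $\frac{X}{1+X}\leq\log(1+X)$. As a self-contained alternative one may instead define $g(X)=\log(1+X)-\frac{X}{1+X}$ and compute $g'(X)=\frac{X}{(1+X)^2}$, again negative on $(-1,0)$ and positive on $(0,\infty)$, so that $g(X)\geq g(0)=0$.

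There is no genuine obstacle here: the only points requiring care are that the hypothesis $X>-1$ guarantees $1+X>0$, so that the logarithm is defined and the derivatives are finite, and that the sign of $f'$ (respectively $g'$) changes exactly at $X=0$, which is what upgrades the local statement to a global one. Of the two routes I would present the substitution argument for the lower bound, since it deduces that half of the lemma from the upper bound with no further differentiation.
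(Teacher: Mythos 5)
Your proof is correct in all details: the monotonicity argument for $f(X)=X-\log(1+X)$ is sound, the substitution $t=\frac{-X}{1+X}$ does satisfy $t>-1$ (since $1+t=\frac{1}{1+X}>0$), and it cleanly derives the lower bound from the upper one. There is, however, nothing in the paper to compare against: the authors state this lemma without proof, merely citing a standard real-analysis textbook (Zorich), so your argument supplies a self-contained verification of a fact the paper treats as known. Both of your routes are standard; the substitution trick is the more economical of the two, and the derivative computation $g'(X)=\frac{X}{(1+X)^2}$ in your fallback is also correct, so either version would serve as a complete proof.
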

The following lemma is a well-known result due to Gutman
\cite{gutman2001}, which will be used in the sequel.
\begin{lemm}\label{lemma1.1}
If $G_1$ and $G_2$ are two graphs with the same number of vertices,
then
$$E(G_1)-E(G_2)={1\over\pi}\int^{+\infty}_{-\infty}
\log\left|\frac{ \phi (G_1, ix)}{\phi(G_2, ix)}\right|
\mathrm{d}x.$$
\end{lemm}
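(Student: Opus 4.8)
The plan is to deduce this difference formula directly from the single-graph Coulson integral formula already recorded in the Introduction,
$$E(G)=\frac{1}{\pi}\int_{-\infty}^{+\infty}\frac{1}{x^2}\log\bigl|x^n\phi(G,i/x)\bigr|\,\mathrm{d}x,$$
which I may assume. Writing the eigenvalues of $G_1$ and $G_2$ as $\lambda_1,\dots,\lambda_n$ and $\mu_1,\dots,\mu_n$ (all real, since adjacency matrices are real symmetric), one has $x^n\phi(G,i/x)=\prod_j(i-\lambda_j x)$, so the integrand of each single-graph formula is $O(1)$ near $x=0$ and decays like $x^{-2}\log|x|$ at infinity; hence both integrals converge absolutely. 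Applying the formula to $G_1$ and to $G_2$ and subtracting, I may therefore combine them under one integral sign.

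The first substantive step is to observe that the prefactor $x^n$ cancels: since the two characteristic polynomials have the same degree $n$,
$$\log\bigl|x^n\phi(G_1,i/x)\bigr|-\log\bigl|x^n\phi(G_2,i/x)\bigr|=\log\left|\frac{\phi(G_1,i/x)}{\phi(G_2,i/x)}\right|,$$
so that $E(G_1)-E(G_2)=\frac{1}{\pi}\int_{-\infty}^{+\infty}\frac{1}{x^2}\log\bigl|\phi(G_1,i/x)/\phi(G_2,i/x)\bigr|\,\mathrm{d}x$. This cancellation is exactly where the hypothesis that $G_1$ and $G_2$ have the same number of vertices enters: for graphs of different orders the $\log|x|$ growth at infinity would not match and the difference would not reduce to this clean form. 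The second step is the change of variable $t=1/x$. Splitting the integral at the origin and using $\mathrm{d}x=-t^{-2}\,\mathrm{d}t$ together with $1/x^2=t^2$, the weight $1/x^2$ is absorbed exactly by the Jacobian on each of the two half-lines; reassembling the pieces (with attention to the orientation of the limits) yields
$$E(G_1)-E(G_2)=\frac{1}{\pi}\int_{-\infty}^{+\infty}\log\left|\frac{\phi(G_1,it)}{\phi(G_2,it)}\right|\,\mathrm{d}t,$$
which is the asserted identity after renaming $t$ as $x$.

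I expect the only real obstacle to be the bookkeeping of convergence, since the individual single-graph integrals must be known to converge before they can be subtracted, and one should check that the final integrand is genuinely integrable: near $x=0$ it has at worst a $\log|x|$ singularity (when $G_1$ or $G_2$ is singular), which is integrable, and at infinity it decays like $x^{-2}$ because the leading $n\log|x|$ terms of $\frac12\sum_j\log(x^2+\lambda_j^2)$ and $\frac12\sum_j\log(x^2+\mu_j^2)$ cancel. As an independent check, and an alternative self-contained route avoiding the change of variable, one can start from $|\phi(G,ix)|=\prod_j\sqrt{x^2+\lambda_j^2}$, pair each $\lambda_j$ with $\mu_j$, and use the elementary identity $\frac{1}{2\pi}\int_{-\infty}^{+\infty}\bigl[\log(x^2+a^2)-\log(x^2+b^2)\bigr]\,\mathrm{d}x=|a|-|b|$ (proved by differentiating under the integral sign in $a$), summing over $j$ to recover $\sum_j(|\lambda_j|-|\mu_j|)=E(G_1)-E(G_2)$; here too the equal number of eigenvalues is what makes the pairing, and hence the convergence, possible.
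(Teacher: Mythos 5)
Your proposal is correct, but there is no internal proof to compare it against: the paper states this lemma as a known result due to Gutman \cite{gutman2001} and gives no argument, so your derivation is genuinely supplementary. Your main route---applying the single-graph Coulson formula $E(G)=\frac{1}{\pi}\int_{-\infty}^{+\infty}x^{-2}\log|x^n\phi(G,i/x)|\,\mathrm{d}x$ to each graph, subtracting, cancelling the common prefactor $x^n$ (correctly identified as the point where the equal-order hypothesis enters; for $n_1\neq n_2$ the integrand $\log|\phi(G_1,ix)/\phi(G_2,ix)|$ would grow like $(n_1-n_2)\log|x|$ and the integral would diverge), and substituting $t=1/x$ on each half-line so that the Jacobian absorbs the weight $x^{-2}$---is sound, and your convergence bookkeeping (integrand $O(1)$ at the origin since $\log(1+\lambda_j^2x^2)=O(x^2)$, decay $x^{-2}\log|x|$ at infinity, and at worst an integrable $\log|x|$ singularity in the final integrand when one of the graphs is singular) is exactly what justifies combining the two absolutely convergent integrals and changing variables. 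Your alternative route via $|\phi(G,ix)|=\prod_j\sqrt{x^2+\lambda_j^2}$ and the pairwise identity $\frac{1}{2\pi}\int_{-\infty}^{+\infty}\bigl[\log(x^2+a^2)-\log(x^2+b^2)\bigr]\mathrm{d}x=|a|-|b|$ is in fact the more standard proof in the literature and has the advantage of not presupposing the Coulson formula at all; the one small point to patch there is the degenerate case $a=0$ or $b=0$, where differentiation under the integral sign at $a=0$ is not directly available and one should instead extend the identity by continuity (both sides are continuous in $(a,b)$, e.g.\ by dominated convergence). With that remark added, either of your two arguments is a complete and correct proof of the lemma.
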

In the following, we list some basic properties of the
characteristic polynomial $\phi(G,x)$\,, which can be found in
\cite{cvet1980}.

\begin{lemm}
\label{zh8} Let $uv$ be an edge of $G$\,. Then
$$
\phi (G,x)=\phi (G-uv,x) - \phi (G-u-v,x)-2\sum _{C\in {\mathcal
C}(uv)}\phi (G-C,x)
$$
where ${\mathcal C}(uv)$ is the set of cycles containing $uv$\,. In
particular, if $uv$ is a pendent edge with pendent vertex $v$\,,
then $ \phi (G,x)=x\,\phi (G-v,x)-\phi (G-u-v,x)$\,.
\end{lemm}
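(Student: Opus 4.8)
The statement is the classical edge-deletion recurrence for the characteristic polynomial, so the plan is to prove it through the combinatorial (Sachs) description of the coefficients rather than by abstract determinant manipulation. For any graph $G$ on $n$ vertices write $\phi(G,x)=\sum_{k\ge 0}a_k x^{n-k}$, where by Sachs' coefficient theorem $a_k=\sum_{S}(-1)^{p(S)}2^{c(S)}$, the sum running over all \emph{basic figures} $S$ of $G$ that cover exactly $k$ vertices, i.e. subgraphs each of whose components is a single edge $K_2$ or a cycle; here $p(S)$ is the number of components and $c(S)$ the number of cycle-components, and the $n-k$ uncovered vertices account for the factor $x^{n-k}$. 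The entire identity will drop out by sorting the basic figures of $G$ according to how the fixed edge $uv$ occurs in them.

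I would split the basic figures of $G$ into three exhaustive and disjoint classes: (A) those not using the edge $uv$; (B) those in which $uv$ is a $K_2$-component; and (C) those in which $uv$ lies on a cycle-component $C$. Class (A) is in obvious bijection with the basic figures of $G-uv$, and since both graphs share the vertex set the powers of $x$ agree, so its total contribution is $\phi(G-uv,x)$. For class (B), deleting the component $uv$ gives a bijection onto the basic figures of $G-u-v$; this removes one component, flipping the sign through $(-1)^{p(S)}$, and removes two covered vertices, which matches the $x$-degrees because $G-u-v$ has $n-2$ vertices, so class (B) contributes $-\phi(G-u-v,x)$. For class (C), fixing a cycle $C\in\mathcal{C}(uv)$ and deleting it gives a bijection onto the basic figures of $G-C$; this removes one component \emph{and} one cycle, producing the factor $(-1)\cdot 2=-2$, with the vertex count again matching since $G-C$ has $n-|C|$ vertices. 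Summing over all cycles through $uv$ yields $-2\sum_{C\in\mathcal{C}(uv)}\phi(G-C,x)$, and adding the three contributions gives the stated formula.

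For the ``in particular'' clause I would specialize to a pendant edge $uv$ with pendant vertex $v$. Since a vertex of degree one lies on no cycle, $\mathcal{C}(uv)=\varnothing$ and the last sum disappears, leaving $\phi(G,x)=\phi(G-uv,x)-\phi(G-u-v,x)$. Deleting the edge $uv$ isolates $v$, and an isolated vertex factors one power of $x$ out of the characteristic polynomial, so $\phi(G-uv,x)=x\,\phi(G-v,x)$; substituting produces $\phi(G,x)=x\,\phi(G-v,x)-\phi(G-u-v,x)$, as claimed.

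The one genuinely delicate point is the sign-and-multiplicity bookkeeping in class (C): one must check that each cycle-subgraph (traversable in two orientations) is responsible for exactly one factor of $2$, and that reindexing the exponent of $x$ when passing to the smaller graphs $G-u-v$ and $G-C$ is done consistently, as these have fewer vertices than $G$. If a self-contained argument is preferred that avoids citing Sachs' theorem, the same three-way split can be performed directly on the Leibniz expansion of $\det(xI-A(G))$, grouping the permutations according to how $u$ and $v$ sit in the cycle structure of $\sigma$; the combinatorics is identical, with the signs and the factor of $2$ now recorded through $\mathrm{sgn}(\sigma)$ and the two orientations of each cycle of length $\ge 3$.
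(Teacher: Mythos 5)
Your proof is correct. The paper itself gives no proof of this lemma at all: it states it as a known fact and cites Cvetkovi\'c--Doob--Sachs \cite{cvet1980}, so there is no in-paper argument to compare against. Your three-way sorting of the Sachs basic figures (edge $uv$ absent, $uv$ as a $K_2$-component, $uv$ on a cycle-component) is the standard proof found in that reference, and your bookkeeping is right at every delicate point: the vertex counts of $G-uv$, $G-u-v$ and $G-C$ make the $x$-exponents match ($x^{n-k}=x^{(n-2)-(k-2)}=x^{(n-|C|)-(k-|C|)}$), the drop in $p(S)$ and $c(S)$ yields exactly the factors $-1$ and $-2$, the classes over distinct $C\in\mathcal{C}(uv)$ are disjoint because a basic figure contains at most one cycle through $u$ and $v$, and the pendant-edge specialization via $\mathcal{C}(uv)=\varnothing$ and the isolated vertex $v$ contributing a factor $x$ is handled correctly.
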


Now we can easily obtain the following lemma from Lemma \ref{zh8}.

\begin{lemm}\label{HJLSlem1}
$\phi(P_n^6,x)=x\phi(P_{n-1}^6,x)-\phi(P_{n-2}^6,x)$ and $\phi
(C_n,x)=\phi (P_{n},x)-\phi (P_{n-2},x)-2$.
\end{lemm}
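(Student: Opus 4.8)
The plan is to obtain both identities as direct specializations of Lemma \ref{zh8}, so that the work is entirely structural: I identify which graphs arise after deleting the chosen vertices or edge, and then substitute. No analytic machinery is needed here; this lemma merely records the two recurrences that the later arguments will iterate.

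For the first identity, I would apply the pendant-edge version of Lemma \ref{zh8} to the leaf at the far end of the tail of $P_n^6$. Recall that $P_n^6$ consists of a $6$-cycle $C_6$ joined by an edge to one endpoint of a path $P_{n-6}$; for $n\geq 8$ this tail has at least two vertices. Let $v$ be the pendant vertex at the far end of the tail and $u$ its unique neighbor. Then the pendant-edge formula gives $\phi(P_n^6,x)=x\,\phi(P_n^6-v,x)-\phi(P_n^6-u-v,x)$. Deleting $v$ simply shortens the tail by one vertex, so $P_n^6-v\cong P_{n-1}^6$; deleting both $v$ and $u$ shortens the tail by two, so $P_n^6-u-v\cong P_{n-2}^6$. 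Substituting these isomorphisms yields $\phi(P_n^6,x)=x\,\phi(P_{n-1}^6,x)-\phi(P_{n-2}^6,x)$, as claimed.

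For the second identity, I would apply the general (cycle) version of Lemma \ref{zh8} to an arbitrary edge $uv$ of $C_n$. Removing the single edge $uv$ leaves the path on all $n$ vertices, so $C_n-uv\cong P_n$; removing the two adjacent vertices $u$ and $v$ leaves the path on the remaining $n-2$ vertices, so $C_n-u-v\cong P_{n-2}$. The only cycle through $uv$ in the unicyclic graph $C_n$ is $C_n$ itself, so the sum $\sum_{C\in\mathcal{C}(uv)}\phi(C_n-C,x)$ has the single term $\phi(C_n-C_n,x)=\phi(\emptyset,x)$. Using the standard convention $\phi(\emptyset,x)=1$ for the empty graph, this contributes $-2\cdot 1=-2$, and Lemma \ref{zh8} becomes $\phi(C_n,x)=\phi(P_n,x)-\phi(P_{n-2},x)-2$.

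Neither identity presents a genuine obstacle; both are bookkeeping consequences of Lemma \ref{zh8}. The only points that require care are the correct identification of the deleted subgraphs with smaller members of the families $\{P_k^6\}$ and $\{P_k\}$, together with the convention $\phi(\emptyset,x)=1$ that turns the single cycle term into the constant $-2$. I would also note explicitly that the first recurrence is valid precisely when the tail is long enough for both deletions to remain of the form $P_k^6$, i.e. for $n\geq 8$, which is exactly the range relevant to Theorem \ref{HJLStheo}.
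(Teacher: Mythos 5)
Your proposal is correct and matches the paper's (implicit) argument exactly: the paper states that the lemma follows easily from Lemma \ref{zh8}, and your proof simply fills in the routine details --- the pendant-edge case applied to the far end of the tail for $\phi(P_n^6,x)$, and the cycle version applied to an edge of $C_n$ with $\mathcal{C}(uv)=\{C_n\}$ and the convention $\phi(\emptyset,x)=1$ for the second identity. The only quibble is your boundary remark: for $n=8$ the two-vertex deletion leaves $C_6$, so taking the recurrence literally in the family $\{P_k^6\}$ requires the convention $P_6^6=C_6$ (the paper only iterates it from the initial values $\phi(P_7^6,x)$ and $\phi(P_8^6,x)$, i.e.\ for $n\geq 9$, where no convention is needed).
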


By some easy calculations, we get
$\phi(P_8^6,x)=x^8-8x^6+19x^4-16x^2+4$ and
$\phi(P_7^6,x)=x^7-7x^5+13x^3-7x$. Now for convenience, we introduce
some notions as follows
\begin{align*}
&Y_1(x)=\frac{x+\sqrt{x^2-4}}{2},\qquad\qquad\qquad~ Y_2(x)=\frac{x-\sqrt{x^2-4}}{2},\\
&C_1(x)=\frac {Y_1(x)(x^2-1)-x}
{(Y_1(x))^{3}-Y_1(x)},\quad\quad~~~~~~ C_2(x)=\frac {Y_2(x)(x^2-1)-x} {(Y_2(x))^{3}-Y_2(x)},\\
&A_1(x)=\frac {Y_1(x)\phi (P_8^6,x)-\phi (P_7^6,x)}
{(Y_1(x))^{9}-(Y_1(x))^{7}},~~ A_2(x)=\frac {Y_2(x)\phi
(P_8^6,x)-\phi (P_7^6,x)} {(Y_2(x))^{9}-(Y_2(x))^{7}}.
\end{align*}
It is easy to verify that $Y_1(x)+Y_2(x)=x$, $Y_1(x)Y_2(x)=1$,
$Y_1(ix)=\frac{x+\sqrt{x^2+4}}{2}i$ and
$Y_2(ix)=\frac{x-\sqrt{x^2+4}}{2}i$. We define
$$f_8=x^8+8x^6+19x^4+16x^2+4,~~f_7=x^7+7x^5+13x^3+7x$$ and
$$Z_1(x)=-iY_1(ix)=\frac{x+\sqrt{x^2+4}}{2},~
Z_2(x)=-iY_2(ix)=\frac{x-\sqrt{x^2+4}}{2}.$$

\begin{lemm}\label{HJLSlem0}
For $n\geq 10$ and $x\neq \pm 2$, the characteristic polynomials of
$P_n^6$ and $C_n$ have the following form
$$\phi (P_n^6,x)=A_1(x)(Y_1(x))^n+A_2(x)(Y_2(x))^n$$ and $$\phi (C_n,x)=(Y_1(x))^n+(Y_2(x))^n-2.$$
\end{lemm}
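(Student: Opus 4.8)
The plan is to solve the two linear recurrences in Lemma \ref{HJLSlem1} explicitly, using that $Y_1(x)$ and $Y_2(x)$ are precisely the two roots of the characteristic equation attached to these recurrences. Indeed, since $Y_1+Y_2=x$ and $Y_1Y_2=1$, the numbers $Y_1(x),Y_2(x)$ are the roots of $t^2-xt+1=0$; and for $x\neq\pm2$ the discriminant $x^2-4$ is nonzero, so $Y_1\neq Y_2$ and the two roots are distinct. This distinctness is exactly what makes the closed forms below well defined, and it is the only place the hypothesis $x\neq\pm2$ is used.

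For $P_n^6$: by Lemma \ref{HJLSlem1} the sequence $\phi(P_n^6,x)$ obeys the linear recurrence $\phi(P_n^6,x)=x\,\phi(P_{n-1}^6,x)-\phi(P_{n-2}^6,x)$ in the relevant range, a recurrence whose characteristic roots are $Y_1$ and $Y_2$. Hence its general solution has the form $A_1(x)Y_1(x)^n+A_2(x)Y_2(x)^n$ for coefficient functions $A_1,A_2$ depending only on $x$. I would pin down $A_1,A_2$ by imposing the two initial conditions at $n=7,8$, using the values $\phi(P_7^6,x)$ and $\phi(P_8^6,x)$ computed above; this is a $2\times2$ linear system whose coefficient determinant is $Y_1^7Y_2^8-Y_1^8Y_2^7=(Y_1Y_2)^7(Y_2-Y_1)=Y_2-Y_1\neq0$. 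Solving by Cramer's rule and then repeatedly replacing $Y_2$ by $1/Y_1$ (and symmetrically $Y_1$ by $1/Y_2$) collapses the expressions to exactly the stated
$$A_1(x)=\frac{Y_1(x)\phi(P_8^6,x)-\phi(P_7^6,x)}{Y_1(x)^9-Y_1(x)^7},\qquad A_2(x)=\frac{Y_2(x)\phi(P_8^6,x)-\phi(P_7^6,x)}{Y_2(x)^9-Y_2(x)^7}.$$
Since the resulting closed form both satisfies the recurrence and matches $\phi(P_n^6,x)$ at the two seed values, it agrees with $\phi(P_n^6,x)$ for every admissible $n$, in particular for $n\geq10$.

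For $C_n$: I would first record the classical closed form for the path, $\phi(P_n,x)=\bigl(Y_1^{n+1}-Y_2^{n+1}\bigr)/(Y_1-Y_2)$, which follows from the same recurrence $\phi(P_n,x)=x\phi(P_{n-1},x)-\phi(P_{n-2},x)$ together with the initial values $\phi(P_0,x)=1$ and $\phi(P_1,x)=x$. Substituting this into the second identity of Lemma \ref{HJLSlem1}, namely $\phi(C_n,x)=\phi(P_n,x)-\phi(P_{n-2},x)-2$, produces a single fraction with numerator $Y_1^{n-1}(Y_1^2-1)-Y_2^{n-1}(Y_2^2-1)$ over $Y_1-Y_2$. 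The key simplification is $Y_1^2-1=Y_1(Y_1-Y_2)$ and $Y_2^2-1=-Y_2(Y_1-Y_2)$, both immediate from $Y_1Y_2=1$; these factor $Y_1-Y_2$ out of the numerator and leave $\phi(C_n,x)=Y_1(x)^n+Y_2(x)^n-2$, as claimed.

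These arguments are essentially bookkeeping, so I do not anticipate a genuine obstacle; the only delicate point is the algebraic reduction of $A_1,A_2$ to the advertised shape, where the systematic use of $Y_1Y_2=1$ to trade powers of $Y_2$ for powers of $Y_1$ (and conversely) is what produces the clean exponents $9$ and $7$ in the denominators. One should also confirm that the $P_n^6$ recurrence is valid down to the seeds $n=7,8$, so that the closed form is correctly anchored; this is the one place I would check the smallest admissible value of $n$ carefully.
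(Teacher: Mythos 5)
Your proposal is correct and follows essentially the same route as the paper: both solve the linear recurrences of Lemma \ref{HJLSlem1} using the characteristic roots $Y_1(x),Y_2(x)$ of $t^2-xt+1=0$ (distinct precisely because $x\neq\pm2$), pin down the coefficients from the initial values $\phi(P_7^6,x),\phi(P_8^6,x)$ for the first formula, and derive the $C_n$ formula from the closed form of $\phi(P_n,x)$ via $\phi(C_n,x)=\phi(P_n,x)-\phi(P_{n-2},x)-2$. Your write-up actually supplies the ``elementary calculations'' the paper omits (the Cramer's rule reduction to the stated $A_1,A_2$ and the factorization $Y_1^2-1=Y_1(Y_1-Y_2)$, $Y_2^2-1=-Y_2(Y_1-Y_2)$), and it sidesteps the paper's garbled statement of the nonhomogeneous recurrence for the cycle.
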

\begin{proof}
By Lemma \ref{HJLSlem1}, we notice that $\phi (P_n^6,x)$ satisfy the
recursive formula $f(n,x)=xf(n-1,x)-f(n-2,x)$. Therefore, the
general solution of this linear homogeneous recurrence relation is
$f(n,x)=D_1(x)(Y_1(x))^n+D_2(x)(Y_2(x))^n$. By some elementary
calculations,  we can easily obtain that $D_i(x)=A_i(x)$ for $\phi
(P_n^6,x)$, $i=1,2$, from the initial values $\phi(P_8^6,x)$,
$\phi(P_7^6,x)$.

By Lemma \ref{HJLSlem1}, $\phi (C_n,x)=\phi (P_{n},x)-\phi
(P_{n-2},x)-2$ and $\phi (P_n,x)$ satisfy the recursive formula
$f(n,x)=f(n-1,x)-f(n-2,x)-3$. Similarly, we can obtain the general
solution of this linear nonhomogeneous recurrence relation from the
initial values $\phi(P_1,x)=x$, $\phi(P_2,x)=x^2-1$.
\end{proof}

{\bf Proof of Theorem \ref{HJLStheo}}~~ For $n=8,12,14$, it is easy
to verify $E(P_n^6)>E(C_n)$. In the following, we always suppose
$n\geq 16$. Using Lemma \ref{lemma1.1}, we can deduce
\begin{align*} E(C_n)-E(P_n^6)=\frac 1 \pi
\int_{-\infty}^{+\infty}\log\left|\frac{ \phi (C_n, ix)}{\phi(P_n^6,
ix)}\right| \mathrm{d}x.
\end{align*}
From Lemma \ref{HJLSlem0}, we have
\begin{align*}
&\phi(C_n,ix)=(Y_1(ix))^n+(Y_2(ix))^n-2=((Z_2(x))^2(x^2+1)-(Z_2(x))^3x)(Z_1(x))^n\cdot i^n\\[2mm]
&\qquad\qquad\quad+((Z_1(x))^2(x^2+1)-(Z_1(x))^3x)(Z_2(x))^n\cdot
i^n-2,\\[2mm]
&\phi(P_n^6,ix)=A_1(ix)(Y_1(ix))^n+A_2(ix)(Y_2(ix))^n\\[2mm]
&\qquad\qquad=\frac{Z_1(x)f_8+f_7}{(Z_1(x))^9+Z_1^7}(Z_1(x))^n\cdot
i^n+\frac{Z_2(x)f_8+f_7}{(Z_2(x))^9+(Z_2(x))^7} (Z_2(x))^n\cdot i^n.
\end{align*}

Firstly, we will prove that $E(C_n)-E(P_n^6)$ is an decreasing
function of $n$ for $n=4k+j$, $j=1,2,3$, namely,
\begin{align*}
&\log\left|\frac{(Y_1(ix))^{n+4}+(Y_2(ix))^{n+4}-2}{A_1(ix)(Y_1(ix))^{n+4}+A_2(ix)(Y_2(ix))^{n+4}}\right|
-\log\left|\frac{(Y_1(ix))^{n}+(Y_2(ix))^{n}-2}{A_1(ix)(Y_1(ix))^{n}+A_2(ix)(Y_2(ix))^{n}}\right|\\[2mm]
=& \log\left(1+\frac{K_0(n,x)}{H_0(n,x)}\right)<0.
\end{align*}

{\bf Case 1} $n=4k+2$.

In this case, $H_0(n,x)=\left|\phi (C_n, ix)\cdot \phi(P_{n+4}^6,
ix)\right|>0$ and
\begin{align*}
K_0(n,x)=&\left(A_1(ix)-A_2(ix)\right)
\left((Y_2(ix))^{4}-(Y_1(ix))^{4}\right)-2A_1(ix)(Y_1(ix))^{n}(1-(Y_1(ix))^{4})\\
&-2A_2(ix)(Y_2(ix))^{n}(1-(Y_2(ix))^{4}).
\end{align*}
Then, by some elementary calculations, we have
\begin{align*}
K_0(n,x)=x(x^2+1)&(x^9+9x^7+30x^5+46x^3+28x \\
&~+(Z_2(x))^n(x^5+5x^3+6x+\sqrt{x^2+4}\,(x^4+3x^2+4))\\
&~+(Z_1(x))^n(x^5+5x^3+6x-\sqrt{x^2+4}\,(x^4+3x^2+4))).
\end{align*}
If $x>0$, then $Z_1(x)> 1$, $-1< Z_2(x)< 0$, and we obtain
\begin{align*}
K_0(n,x)=x(x^2+1)(Z_1(x))^nq(n,x) < x(x^2+1)(Z_1(x))^nq(10,x),
\end{align*}
where
\begin{align*}
q(n,x)=&(Z_2(x))^n(x^9+9x^7+30x^5+46x^3+28x)\\
&+(Z_2(x))^{2n}(x^5+5x^3+6x+\sqrt{x^2+4}\,(x^4+3x^2+4))\\
&+x^5+5x^3+6x-\sqrt{x^2+4}\,(x^4+3x^2+4).
\end{align*}
By some simplifications,
\begin{align*}
q(10,x)&=-\frac 1
2x(x^2+4)(2x^8+17x^6+47x^4+46x^2+10)\,\cdot\\
&(x^{10}+10x^8+35x^6+50x^4+25x^2+2-\sqrt{x^2+4}(x^9+8x^7+21x^5+20x^3+5x)).
\end{align*}
Since
$$\left(x^{10}+10x^8+35x^6+50x^4+25x^2+2\right)^2-\left(\sqrt{x^2+4}\,(x^9+8x^7+21x^5+20x^3+5x)\right)^2=4,$$
we have $q(10,x)<0$, and hence $\frac{K_0(n,x)}{H_0(n,x)}<0$.
Similarly, we can prove $\frac{K_0(n,x)}{H_0(n,x)}<0$ for $x<0$.

Therefore, we have shown that $E(C_n)-E(P_n^6)$ is an decreasing
function of $n$ for $n=4k+2$.

{\bf Case 2} $n=4k+j$, $j=1,3$.

In this case, $H_0(n,x)=\left|\phi (C_n, ix)\cdot \phi(P_{n+4}^6,
ix)\right|>0$ and
\begin{align*}
K_0(n,x)=&\left|((Y_1(ix))^{n+4}+(Y_2(ix))^{n+4}-2)(A_1(ix)(Y_1(ix))^{n}+A_2(ix)(Y_2(ix))^{n})\right|\\
&-
\left|(A_1(ix)(Y_1(ix))^{n+4}+A_2(ix)(Y_2(ix))^{n+4})((Y_1(ix))^{n}
+(Y_2(ix))^{n}-2)\right|\\
=&\sqrt{p(n,x)}-\sqrt{w(n,x)}, \end{align*} where
\begin{align*}
p(n,x)=&\left(A_2(ix)(Z_1(x))^4+A_1(ix)(Z_2(x))^4-A_1(ix)(Z_1(x))^{2n+4}-A_2(ix)(Z_2(x))^{2n+4}\right)^2\\
&+\left(-2A_1(ix)(Z_1(x))^n-2A_2(ix)(Z_2(x))^n\right)^2,\\
w(n,x)=&\left(A_1(ix)(Z_1(x))^4+A_2(ix)(Z_2(x))^4-A_1(ix)(Z_1(x))^{2n+4}-A_2(ix)(Z_2(x))^{2n+4}\right)^2\\
&+\left(-2A_1(ix)(Z_1(x))^{n+4}-2A_2(ix)(Z_2(x))^{n+4}\right)^2.
\end{align*}
Now we only need to check $p(n,x)-w(n,x)<0$ for all $x$ and $n$.
Firstly we suppose $n=4k+1$. If $x>0$, then
$(Z_1(x))^{2n}>(Z_1(x))^{10}$, $(Z_2(x))^{2n}<(Z_2(x))^{10}$, and we
have
\begin{align*}
p(n,x)-w(n,x)=&~x(x^2+2)^3(x^2+1)^3(x^{11}+11x^9+46x^7+92x^5+88x^3+28x\\
&\qquad\qquad\quad\qquad\qquad-2(Z_1(x))^{2n}(\sqrt{x^2+4}\,(x^2+2)+x)\\
&\qquad\qquad\quad\qquad\qquad+2(Z_2(x))^{2n}(\sqrt{x^2+4}\,(x^2+2)-x))\\
<&~p(5,x)-w(5,x)\\
=&~-x^2(x^2+4)(x^2+1)^4(x^2+2)^3(2x^8+19x^6+60x^4+68x^2+14)<0.
\end{align*}
If $x<0$, then $(Z_1(x))^{2n}<(Z_1(x))^{10}$,
$(Z_2(x))^{2n}>(Z_2(x))^{10}$. Similarly,
$p(n,x)-w(n,x)<p(5,x)-w(5,x)<0$. By the same discussion as the case
of $n=4k+1$, for $n=4k+3$ and $x>0$ or $x<0$, we can deduce that
\begin{align*}
&p(n,x)-w(n,x)<p(7,x)-w(7,x)\\
=&-x^2(x^2+4)(x^2+2)^3(x^2+1)^3\,\cdot\\
&(2x^{14}+30x^{12}+178x^{10}+533x^8+849x^6+690x^4+242x^2+22)<0.
\end{align*}
Thus, we have done for $n=4k+j$, $j=1,3$.

Therefore, we have shown that $E(C_n)-E(P_n^6)$ is an decreasing
function of $n$ for $n=4k+j$, $j=1,2,3$. So, when $n=4k+2$,
$E(C_n)-E(P_n^6)<E(C_{18})-E(P_{18}^6)\doteq -0.03752<0$; when
$n=4k+1$, $E(C_n)-E(P_n^6)<E(C_{17})-E(P_{17}^6)\doteq -0.00961<0$;
when $n=4k+3$, $E(C_n)-E(P_n^6)<E(C_{19})-E(P_{19}^6)\doteq
-0.02290<0$.

Finally, we will deal with the case of $n=4k$. Notice that in this
case both $\phi (C_n, ix)$ and $\phi(P_{n}^6, ix)$ are polynomials
of $x$ with all real coefficients. When $n\rightarrow \infty$,
$$\frac{(Y_1(ix))^n+(Y_2(ix))^n-2}{A_1(ix)(Y_1(ix))^n+A_2(ix)(Y_2(ix))^n}\rightarrow\left\{
 \begin{array}{ll}
 \frac{1}{A_1(ix)} &\mbox{if $x>0$}\\[3mm]
\frac{1}{A_2(ix)} &\mbox{if $x<0$}.
 \end{array}
 \right.
 $$
In this case, we will prove $$\log
\frac{(Y_1(ix))^n+(Y_2(ix))^n-2}{A_1(ix)(Y_1(ix))^n+A_2(ix)(Y_2(ix))^n}<\log
\frac{1}{A_1(ix)}$$ for $x>0$ and  $$\log
\frac{(Y_1(ix))^n+(Y_2(ix))^n-2}{A_1(ix)(Y_1(ix))^n+A_2(ix)(Y_2(ix))^n}<\log
\frac{1}{A_2(ix)}$$ for $x<0$. In the following we only check the
case of $x>0$ as the case of $x<0$ is similarly. Assume
$$\log
\frac{(Y_1(ix))^n+(Y_2(ix))^n-2}{A_1(ix)(Y_1(ix))^n+A_2(ix)(Y_2(ix))^n}-\log
\frac{1}{A_1(ix)}=\log\left(1+\frac {K_1(n,x)} {H_1(n,x)}\right),$$
by some elementary calculations, we obtain $H_1(n,x)>0$ and
\begin{align*}
&K_1(n,x)=-\frac{x^2+1}{x^2+4}\cdot\\
&\left(x^8+9x^6+28x^4+36x^2+16+((Z_2(x))^n-1)\sqrt{x^2+4}\,(x^7+7x^5+16x^3+14x)\right)\\
&<-\frac{x^2+1}{x^2+4}\cdot\left(x^8+9x^6+28x^4+36x^2+16-\sqrt{x^2+4}\,(x^7+7x^5+16x^3+14x)\right)<0,
\end{align*}
since
\begin{align*}
&\left(x^8+9x^6+28x^4+36x^2+16\right)^2-\left(\sqrt{x^2+4}\,(x^7+7x^5+16x^3+14x\right)^2\\
=&~4x^8+48x^6+204x^4+368x^2+256>0.
\end{align*}
Notice that if $x>0$, then
$A_1(ix)=\frac{Z_1(x)f_8+f_7}{(Z_1(x))^9+Z_1^7}>0$, and if $x<0$,
then $A_2(ix)=\frac{Z_2(x)f_8+f_7}{(Z_2(x))^9+Z_2^7}=
\frac{Z_1(x)\cdot\left(Z_2(x)f_8+f_7\right)}{Z_1(x)\cdot\left((Z_2(x))^9+Z_2^7\right)}
=\frac{f_8-Z_1(x)f_7}{(Z_2(x))^8+Z_2^6}>0$.
Thus, by Lemma
\ref{inequalitylemm}, we have $$\frac 1 \pi\int_{0}^{+\infty}\log
\frac 1 {A_1(ix)}\mathrm{d}x<\frac 1
\pi\int_{0}^{+\infty}\left(\frac 1
{A_1(ix)}-1\right)\mathrm{d}x\doteq -0.047643;$$
$$\frac 1 \pi\int_{-\infty}^{0}\log \frac 1
{A_2(ix)}\mathrm{d}x<\frac 1 \pi\int_{0}^{+\infty}\left(\frac 1
{A_2(ix)}-1\right)\mathrm{d}x\doteq -0.047643.$$ Therefore,
$$E(C_n)-E(P_n^6)<\frac 1
\pi\int_{0}^{+\infty}\log \frac 1
{A_1(ix)}\mathrm{d}x+\int_{-\infty}^{0}\log \frac 1
{A_2(ix)}\mathrm{d}x<-0.047643-0.047643<0. $$

The proof is completed. \qed

\end{document}